%
\documentclass[10pt]{amsart}
\usepackage{amssymb, tikz, mathrsfs, stmaryrd}
\usepackage[all]{xy}

\title[Uniformization of certain correspondences]{$p$-adic uniformization and the action of Galois on certain affine correspondences}
\author{Patrick Ingram}
\date{\today}
\email{pingram@yorku.ca}
\address{Colorado State University, Fort Collins, USA\newline Current address: York University, Toronto, Canada}

\thanks{This research was supported in part by Simons Collaboration Grant \#283120. }

\subjclass[2010]{37P20 (primary), and 11S20 (secondary)}

\newcommand{\QQ}{\mathbb{Q}}
\newcommand{\ZZ}{\mathbb{Z}}
\newcommand{\CC}{\mathbb{C}}

\newcommand{\PP}{\mathbb{P}}

\newcommand{\Ocal}{\mathcal{O}}

\newcommand{\Gal}{\operatorname{Gal}}
\newcommand{\Spec}{\operatorname{Spec}}

\newcommand{\Aut}{\operatorname{Aut}}


\newcommand{\MOD}[1]{~(\textup{mod}~#1)}

\newcommand{\sep}[1]{\overline{#1}}


\newcommand{\path}{\mathscr{P}}

\renewcommand{\phi}{\varphi}
\renewcommand{\epsilon}{\varepsilon}




\newtheorem{theorem}{Theorem}
\newtheorem{lemma}[theorem]{Lemma}

\theoremstyle{remark}
\newtheorem{remark}[theorem]{Remark}

\theoremstyle{definition}


\begin{document}
\begin{abstract}
Given two monic polynomials $f$ and $g$ with coefficients in a number field $K$, and some $\alpha\in K$, we examine the action of the absolute Galois group $\Gal(\overline{K}/K)$ on the directed graph of iterated preimages of $\alpha$ under the correspondence $g(y)=f(x)$, assuming that $\deg(f)>\deg(g)$ and that $\gcd(\deg(f), \deg(g))=1$. If a prime of $K$ exists at which $f$ and $g$ have integral coefficients, and at which $\alpha$ is not integral, we show that this directed graph of preimages consists of finitely many $\Gal(\overline{K}/K)$-orbits. We obtain this result by establishing a $p$-adic uniformization of such correspondences, tenuously related to B\"{o}ttcher's uniformization of polynomial dynamical systems over $\CC$, although the construction of a B\"{o}ttcher coordinate for complex holomorphic correspondences remains unresolved.
\end{abstract}

\maketitle


\section{Introduction}
Let $K$ be a number field, and let $f(z)\in K[z]$ be a polynomial. Boston and Jones~\cite{bostonjones} considered the action of the absolute Galois group $\Gal(\overline{K}/K)$ on the set of iterated preimages of any $\alpha\in K$ under $f$. These preimages form a tree $T_{f, \alpha}$, with edge relation defined by the application of $f$, and so we  have an \emph{arboreal Galois representation}
\[\rho_{f, \alpha}:\Gal(\sep{K}/K)\to \operatorname{Aut}(T_{f, \alpha}),\]
where the latter is the automorphism group of $T_{f, \alpha}$ as an abstract graph with a marked point $\alpha$. One naturally wonders when the image of $\rho_{f, \alpha}$ has finite index in $\operatorname{Aut}(T_{f, \alpha})$, or perhaps (less ambitiously) when the paths through $T_{f, \alpha}$ break down into only finitely many orbits under $\Gal(\overline{K}/K)$ (see Jones and Levy~\cite{joneslevy}, and for a broader overview, the survey article of Jones~\cite{jones}).

Now let $g(z)\in K[z]$ be another polynomial, and consider the \emph{correspondence}
\begin{equation}\label{eq:cdef}C:g(y)=f(x).\end{equation}
That is, consider the (directed) graph whose vertices are the elements of $\overline{K}$ with an edge from $x$ to $y$ if and only if $g(y)=f(x)$. For any $\alpha\in K$, let $P_{C, \alpha}$ denote the subgraph of backward paths from $\alpha$. Then again, $\Gal(\overline{K}/K)$ acts on $P_{C, \alpha}$, and we are naturally impelled to inquire as to how free this action is. One would like to conjecture that there are in general few constraints, other than the edge relation defined by the correspondence. In other words, one might conjecture that the image of $\Gal(\overline{K}/K)$ has finite index in the graph-theoretic automorphism group $\operatorname{Aut}(P_{C, \alpha})$, except for some easy-to-describe exceptional class of correspondences. Note that this case contains the previous one, since setting $g(y)=y$ gives $P_{C, \alpha}=T_{f, \alpha}$. If instead we set $f(x)=x$ and leave $g$ alone, then $P_{C, \alpha}$ is a single $K$-rational path (consisting of the iterated \emph{post}images of $\alpha$ under $g$). In this case $\Gal(\overline{K}/K)$ surjects onto $\operatorname{Aut}(P_{C, \alpha})$ in a fairly unexciting way.

\begin{figure}
\caption{Two backward paths from $\alpha$ when $\deg(f)=3$, $\deg(g)=2$}
\vspace{2mm}
\begin{tikzpicture}[->,shorten >=1pt,auto,node distance=1.5cm,
  thick,main node/.style={circle,fill=white,draw,font=\sffamily\Large}]
  
  \draw[-]  plot [smooth, tension=1] coordinates { (-3,.25) (0,0) (3,.125) (4,0)};
  \node at (4.5, 0){$\PP^1$};

  \draw[-]  plot [smooth, tension=1] coordinates { (-3,5.5) (0,5.25) (3,5.375) (4,5.25)};
  \node at (4.5, 5.25){$\check{\path}_{C, \alpha}$};

  \node[main node] (1) {$\alpha$};
  \node[main node] (2) [above of=1] {$\alpha_1$};
  \node[main node] (215) [dashed] [above of=2] {};
  \node[main node] (21) [dashed]  [right of=215] {};
  \node[main node] (22) [dashed] [right of=21] {};
  \node[main node] (3) [left of=215] {$\alpha_2$};
  
  \node[main node] (98) [dashed] [right of=2] {};
  \node[main node] (99) [dashed] [right of=98] {};
    
    
  \node[main node] (10) [dashed]  [right of=1] {};
  \node[main node] (11) [dashed]  [left of=2] {};
  
  
  \node[main node] (31)   [above left of=3] {$\alpha_3$};
  \node[main node] (32)  [above right of=3] {$\beta_3$};
  
  \node[main node] (33) [right of=32] [dashed] {};
  
  \node[main node] (41) [above of=31] {$P$};
  \node[main node] (42) [above of=32] {$Q$};

  \path[every node/.style={font=\sffamily\small}]
   (2) edge node [right] {} (1)
   (3) edge node [right] {} (2)
   (2) edge [dashed] node [right] {} (10)
   (3) edge [dashed] node [right] {} (11)

   (98) edge [dashed] node [right] {} (1)
   (98) edge [dashed] node [right] {} (10)
   (99) edge [dashed] node [right] {} (1)
   (99) edge [dashed] node [right] {} (10)

   (31) edge [dashed] node [right] {} (215)
   (32) edge [dashed] node [right] {} (215)

   (33) edge [dashed] node [right] {} (215)
   (33) edge [dashed] node [right] {} (3)

   (21) edge [dashed] node [right] {} (2)
   (21) edge [dashed] node [right] {} (11)
   (22) edge [dashed] node [right] {} (11)
   (31) edge  node [right] {} (3)
   (32) edge  node [right] {} (3)
   (41) edge [dashed] node [right] {} (31)
   (42) edge [dashed] node [right] {} (32)
   (22) edge [dashed] node [right] {} (2);


\end{tikzpicture}
\end{figure}
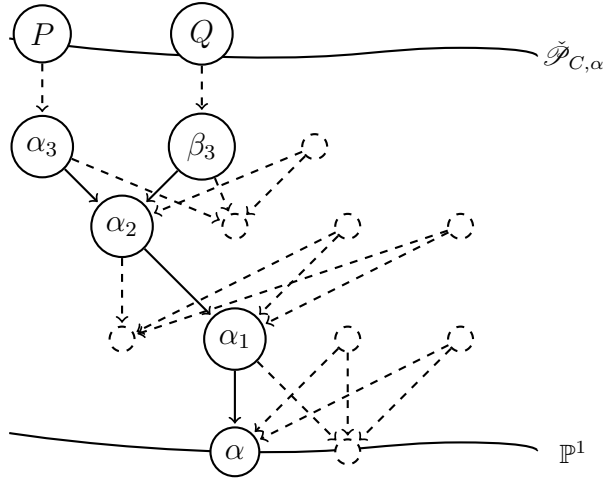

The purpose of this note is to extend some earlier work of the author~\cite{blms} on arboreal Galois representations over local fields to the setting of correspondences.  If we start over a number field and there exists a prime such that the conditions of the below theorem are met at that localization, then of course the conclusion holds over that number field as well.
In this context it is arguably more natural to think in terms of forward orbits, but for the sake of consistency we consider preimages. Moving from a correspondence
$C:g(y)=f(x)$
to its \emph{dual}
$\check{C}:f(y)=g(x)$
swaps the two notions. We will say that a correspondence as in~\eqref{eq:cdef} is \emph{polarized} if and only if $\deg(g)<\deg(f)$ (which is the definition in \cite{cor_heights} restricted to the current context). If $C$ is defined over a local ring, then we will say that $C$ has \emph{good reduction} if and only if the coefficients of $f$ and $g$ are integral, and the leading coefficient of each is a unit.  Denote by $\check{\path}_{C, \alpha}$ the set of paths through $P_{C, \alpha}$, that is, the set of sequences $\{\alpha_n\}_{n\geq 0}$ with $\alpha_0=\alpha$, and $g(\alpha_{n})=f(\alpha_{n+1})$, an object acted upon naturally by $\operatorname{Gal}(\sep{K}/K)$, if $f$ and $g$ have coefficients in $K$. Finally, if $G$ is a group acting on a set $X$, we will say that $G$ acts \emph{nearly transitively} if and only if $X$ is a union of finitely many $G$-orbits.
\begin{theorem}\label{th:main}
Let $K$ be a field of characteristic zero with a discrete, non-archimedean valuation, let $C:g(y)=f(x)$ be a polarized correspondence over $K$, and let $\alpha\in K$. Assume that
\begin{enumerate}
\item $C$ has good reduction;
\item $\deg(f)$ and $\deg(g)$ are units in $K$;
\item $\gcd(\deg(f), \deg(g))=1$; and
\item $\alpha$ is not integral.
\end{enumerate}
Then  $\operatorname{Gal}(\sep{K}/K)$ acts nearly transitively on $\check{\path}_{C, \alpha}$.
\end{theorem}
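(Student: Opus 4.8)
The plan is to bypass constructing a B\"{o}ttcher coordinate for the correspondence and instead read off near-transitivity directly from the valuations, using coprimality and non-integrality to force the relevant field extensions to be nearly as large as the tree permits. Fix an extension of the valuation $v$ to $\sep{K}$, normalized so that $v(K^\times)=\ZZ$, and write $v(\alpha)=-m$ with $m>0$. Because $f$ and $g$ are monic with integral coefficients, for any $z$ with $v(z)<0$ the leading term strictly dominates, so $v(f(z))=\deg(f)\,v(z)$ and $v(g(z))=\deg(g)\,v(z)$, while $f$ and $g$ carry $\{v\ge 0\}$ into itself. I would first show by induction that every backward path $\{\alpha_n\}$ stays in the region $\{v<0\}$: if $v(\alpha_n)<0$ then $v(g(\alpha_n))<0$, so any solution $\alpha_{n+1}$ of $f(\alpha_{n+1})=g(\alpha_n)$ must also have $v(\alpha_{n+1})<0$. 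The two valuation formulas then give $\deg(f)\,v(\alpha_{n+1})=\deg(g)\,v(\alpha_n)$, whence
\[
v(\alpha_n)=-m\left(\frac{\deg(g)}{\deg(f)}\right)^{n}
\]
for every path and every $n$.

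Next I would extract forced ramification. Put $d=\deg(f)$ and $e=\deg(g)$. The value group of $K(\alpha_n)$ contains $v(\alpha_n)=-me^n/d^n$, so its index over $\ZZ=v(K^\times)$ is at least the reduced denominator of $me^n/d^n$. Since $\gcd(d,e)=1$ forces $\gcd(e^n,d^n)=1$, that denominator equals $d^n/\gcd(m,d^n)\ge d^n/m_d$, where $m_d$ denotes the part of $m$ supported on the primes dividing $d$. By the fundamental inequality for valued field extensions, $[K(\alpha_n):K]\ge e(K(\alpha_n)/K)\ge d^n/m_d$, and this lower bound is uniform over all paths. I would then count orbits level by level: each vertex of $P_{C,\alpha}$ has at most $d$ children, so there are at most $d^n$ backward paths of length $n$, and $\Gal(\sep{K}/K)$ permutes this finite set. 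The orbit of a length-$n$ path $(\alpha_1,\dots,\alpha_n)$ has size $[K(\alpha_1,\dots,\alpha_n):K]\ge[K(\alpha_n):K]\ge d^n/m_d$; partitioning at most $d^n$ paths into orbits each of size at least $d^n/m_d$ leaves at most $m_d$ orbits at every level.

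Finally I would upgrade this uniform level-wise bound to the space of infinite paths by a profinite compactness argument. Sending a $\Gal$-orbit of an infinite path to the compatible system of its level-$n$ orbits lands it in $\varprojlim_n O_n$, where each orbit set $O_n$ has $|O_n|\le m_d$; since the transition maps are surjective and the cardinalities are nondecreasing and bounded, this inverse limit is finite. Injectivity of the map is precisely the claim that two paths conjugate at every finite level are conjugate by a single $\sigma$, which holds because the nested sets $S_n=\{\sigma:\sigma\text{ matches them through level }n\}$ are nonempty closed subsets of the compact group $\Gal(\sep{K}/K)$ and hence meet; surjectivity follows from the analogous compactness of $\check{\path}_{C,\alpha}$ itself. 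Thus $\check{\path}_{C,\alpha}$ breaks into at most $m_d$ orbits.

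The main obstacle is the ramification step: everything hinges on $v(\alpha_n)$ dropping strictly and on its denominator remaining coprime to $e$, which is exactly where hypotheses (3) and (4) enter — without non-integrality the valuations never force ramification, and without coprimality the factor $(e/d)^n$ need not produce a growing denominator (the case $f=g$ collapses the tree completely and destroys near-transitivity). I would want to verify carefully that the fundamental inequality is applied correctly for a valued field that need not be complete, and that the leading-term domination in the first step is genuinely unconditional. That last point suggests the valuation-counting route establishes near-transitivity from hypotheses (1), (3), and (4) alone, with hypothesis (2) reserved for the stronger structural $p$-adic uniformization advertised in the introduction, of which near-transitivity is merely a corollary.
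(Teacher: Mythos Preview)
Your argument is correct and takes a genuinely different route from the paper. The paper proves Theorem~\ref{th:main} as a corollary of the structural Theorem~\ref{th:second}: it builds a $p$-adic B\"{o}ttcher coordinate $\Xi$ (Theorems~\ref{th:formal} and~\ref{th:isomdisk}) identifying the path space of $C$ near infinity with that of the model correspondence $B:y^e=x^d$, then reduces the Galois analysis of $B$ to ordinary Kummer theory via Lemmas~\ref{lem:ede1} and~\ref{lem:kummer}. Your approach bypasses all of this machinery and instead uses the valuation formula $v(\alpha_n)=-m(e/d)^n$ together with the ramification bound $[K(\alpha_n):K]\ge d^n/\gcd(m,d^n)$ to get a uniform level-wise orbit count, then passes to the limit by profinite compactness. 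This is exactly the ``more direct argument, relying on a generalized Eisenstein condition'' that the paper alludes to in the introduction but does not carry out. What your route buys is simplicity, and as you note, it never uses hypothesis~(2); what the paper's route buys is the stronger conclusion of Theorem~\ref{th:second}, namely that the Galois image actually contains a finite-index subgroup of a specific $d$-adic subgroup of $\Aut(P_{C,\alpha})$, not merely that the number of orbits on ends is finite.
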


More specifically, let $T$ be the regular $d$-branching tree whose vertices are the elements of $\bigcup_{n\geq 0} \ZZ/d^n\ZZ$ (a disjoint union), with $g_{n+1}\in \ZZ/d^{n+1}\ZZ$ sitting above $g_n\in \ZZ/d^n\ZZ$ in the tree if and only if $g_{n+1}\equiv g_n\MOD{d^n}$. The \emph{ends} of the tree $T$ (that is, the infinite paths beginning at the root) are then naturally identified with the elements of the group $H=\varprojlim \ZZ/d^n\ZZ$. The elements of $H$ are sequences $(h_0, h_1, ...)$ with $h_n\in \ZZ/d^n\ZZ$, subject to the condition that $h_{n+1}\equiv h_n\MOD{d^n}$. Note that there is a natural action of $H$ on $T$, with $(h_0, h_1, ...)$ taking $g_n\in \ZZ/d^n\ZZ\subseteq T$ to $h_n+g_n$, which is transitive on each level just because $H$ is a group; the corresponding action on the ends of $T$ is just the translation action of $H$ on itself. Any subgroup of $\operatorname{Aut}(T)$ which is conjugate to $H$ will be called a \emph{$d$-adic subgroup}, a condition which is independent of the choice of labelling, and hence may be applied to any infinite $d$-branching rooted tree. In other words, a $d$-adic subgroup is one which arises as $H$ for some labelling of the vertices of $T$ as above.

The next theorem sheds some more light on exactly what sorts of actions $\Gal(\overline{K}/K)$ can have on preimage trees for correspondences.
\begin{theorem}\label{th:second}
Let $K$, $C$, and $\alpha$ be as in Theorem~\ref{th:main}, and let $G\subseteq \Aut(P_{C, \alpha})$ be the image of $\Gal(\overline{K}/K)$. Then there is a $d$-adic subgroup $H\subseteq \operatorname{Aut}(P_{C, \alpha})$ such that $G\cap H$ has finite index in $H$.
\end{theorem}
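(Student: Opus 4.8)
Write $d=\deg(f)$ and $e=\deg(g)$, so that $P_{C,\alpha}$ is a $d$-branching tree. The plan is to transport the whole tree through the $p$-adic uniformizing coordinate supplied by the proof of Theorem~\ref{th:main}, thereby turning the backward-path structure into the extraction of $d$-power roots of a single element $t\in K$; the action of $\Gal(\overline{K}/K)$ then becomes an affine (cyclotomic-plus-Kummer) action, and the theorem reduces to a finite-index statement in Kummer theory that is driven by hypotheses~(2) and~(4).

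First I would recall the uniformization. On the non-integral locus there is a $K$-rational coordinate $\phi(z)=z(1+\cdots)$, convergent and bijective where $\ord(z)<0$, with $\phi(y)^{e}=\phi(x)^{d}$ whenever $g(y)=f(x)$. Put $t=\phi(\alpha)\in K$, so $\ord(t)=\ord(\alpha)=:m<0$. If $(\alpha_n)$ is a backward path and $\xi_n=\phi(\alpha_n)$, the correspondence relation reads $\xi_{n+1}^{d}=\xi_n^{e}$, whence $\xi_n^{d^{n}}=t^{e^{n}}$ by induction. Since $\gcd(d,e)=1$, raising to the $e$-th power permutes the $d^{k}$-th roots of any fixed element, so each $d^{n}$-th root of $t^{e^{n}}$ has a unique parent and exactly $d$ children; thus $\phi$ identifies the level-$n$ vertices of $P_{C,\alpha}$ with the full set of $d^{n}$-th roots of $t^{e^{n}}$, a torsor under $\mu_{d^{n}}$.

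Next I would pin down the $d$-adic subgroup. Fix a compatible system $\zeta_{d^{n}}$ of roots of unity ($\zeta_{d^{n}}^{d}=\zeta_{d^{n-1}}$) and an end $(\xi_n^{(0)})$ of the uniformized tree, so that every level-$n$ vertex is $\zeta_{d^{n}}^{c}\xi_n^{(0)}$ for a unique $c\in\ZZ/d^{n}\ZZ$. Let $H$ consist of the automorphisms that multiply level $n$ by $\omega_n\in\mu_{d^{n}}$, subject to the edge-compatibility $\omega_n^{d}=\omega_{n-1}^{e}$; in the labels $c$ these are the maps $c\mapsto c+a_n$ with $a_n\equiv e\,a_{n-1}\MOD{d^{n-1}}$. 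The substitution $a_n\mapsto e^{-n}a_n$ converts this into $b_n\equiv b_{n-1}\MOD{d^{n-1}}$ and simultaneously straightens the parent map, exhibiting a relabelling of the vertices in which $H$ acts by translation; hence $H$ is a $d$-adic subgroup of $\Aut(P_{C,\alpha})$. Because $\phi$ is defined over $K$, an element $\sigma\in\Gal(\overline{K}/K)$ acts on the labels by the affine map $c\mapsto\chi(\sigma)c+\kappa_n(\sigma)$, where $\chi$ is the cyclotomic character on $\mu_{d^{\infty}}$ and $\sigma\xi_n^{(0)}=\zeta_{d^{n}}^{\kappa_n(\sigma)}\xi_n^{(0)}$. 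Such an automorphism is a translation, i.e.\ lies in $H$, precisely when $\chi(\sigma)=1$, so $G\cap H$ is the image of $\kappa$ restricted to $\Gal(\overline{K}/K(\mu_{d^{\infty}}))$, on which $\kappa$ is an honest homomorphism into $H\cong\varprojlim\ZZ/d^{n}\ZZ$.

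Finally I would estimate the index, and this is where the hypotheses enter decisively. By~(2) the residue characteristic divides neither $d$ nor $e$, so $K(\mu_{d^{\infty}})/K$ is unramified and its value group remains $\ZZ$. As $\ord(t)=m\neq 0$ by~(4), adjoining a $d^{n}$-th root of $t^{e^{n}}$ forces a value-group element of denominator $d^{n}/\gcd(d^{n},m)$, so the extension is ramified of degree at least $d^{n}/\gcd(d^{n},m)\ge d^{n}/|m|$; hence the image of $\kappa_n$ has index at most $|m|$ in $\ZZ/d^{n}\ZZ$, uniformly in $n$, and passing to the inverse limit gives $[H:G\cap H]\le |m|<\infty$. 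The deepest input is the uniformization itself, but that is furnished by Theorem~\ref{th:main}; within the present argument the crux is the ramification estimate of this last step, and it is exactly the unit hypothesis~(2) that rules out wild cyclotomic ramification in $K(\mu_{d^{\infty}})/K$ and so keeps $\ord(t)$ a genuine obstruction to $d$-divisibility. Without it the cyclotomic tower could absorb the valuation of $t$, and the index could fail to be finite.
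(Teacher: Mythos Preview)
Your argument rests on a one-variable coordinate $\phi(z)=z(1+\cdots)$ satisfying $\phi(y)^{e}=\phi(x)^{d}$ on $C$, and this is the gap: no such $\phi$ exists in general. Take $g(y)=y^{2}-y$ and $f(x)=x^{3}$, and parametrize $C$ near infinity by $y=t^{-3}$, $x=t^{-2}(1-t^{3})^{1/3}$. Writing $\phi(z)=z+c_{0}+c_{1}z^{-1}+\cdots$, one computes that the coefficient of $t^{-4}$ in $\phi(x)^{3}-\phi(y)^{2}$ is $3c_{0}$ while that of $t^{-3}$ is $-1-2c_{0}$, forcing $c_{0}=0$ and $c_{0}=-\tfrac12$ at once. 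The uniformization actually provided by the paper (Theorems~\ref{th:formal} and~\ref{th:isomdisk}, not Theorem~\ref{th:main}) is a map $\Xi$ on the \emph{path space} $\path_{C}$: the series $\xi$ depends on an entire forward orbit $(w_{0},w_{1},\ldots)$, not on $w_{0}$ alone. To get an element $t$ on which to run Kummer theory one must first choose a rational point $P\in\path_{C}(K)$ with $\pi(P)=\alpha$ and set $t=\psi\circ\Xi(P)$; since consecutive coordinates of $P$ have valuations in the ratio $d/e$, this already forces an infinite algebraic extension $L/K$ in which $e$ is inverted in the value group (but, because $\gcd(d,e)=1$, no prime dividing $d$ is). So your step ``$t=\phi(\alpha)\in K$'' cannot be taken over $K$ itself.

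With that correction in place, the rest of your outline matches the paper. Your relabelling $a_{n}\mapsto e^{-n}a_{n}$ that untwists the edge compatibility $a_{n}\equiv e\,a_{n-1}\MOD{d^{n-1}}$ is exactly the content of Lemma~\ref{lem:ede1} (the isomorphism $\mathscr{G}_{e,d}\cong\mathscr{G}_{e,1}$), and your final ramification bound is Lemma~\ref{lem:kummer}, with the cosmetic change that the ambient value group is $\ZZ[1/e]$ rather than $\ZZ$; since $\gcd(d,e)=1$ this does not affect the estimate $[H:G\cap H]\leq |m|$.
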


Note that Theorem~\ref{th:second} immediately implies Theorem~\ref{th:main}, since the $H$-orbits in $\check{\path}_{C, \alpha}\cong H$ are unions of cosets of $G\cap H$, and Theorem~\ref{th:second} implies that there are only finitely many of these cosets. We note also that the results in this context are weaker than the analogous results for single-valued polynomial dynamical systems, as found in~\cite{blms}. This is primarily because the proof in the case of correspondences requires us to pass to much larger extension of $K$. We note also that it might be possible obtain Theorem~\ref{th:main} by a more direct argument, relying on a generalized Eisenstein condition, as in the results of Jones and Levy~\cite{joneslevy}. We were motivated as much by the application as by a wish to generalize the B\"{o}ttcher uniformization to correspondences; it would be interesting to complete the construction of such a coordinate over $\CC$ (the work here gives only a formal uniformization in the archimedean case, with no claim of convergence).

Our approach is similar to that taken in~\cite{blms}, although things become somewhat more abstract in the current setting. In broad strokes, we  complete  $K$ and then construct a partial isomorphism between the correspondence $C:g(y)=f(x)$ and the correspondence $y^{\deg(g)}=x^{\deg(f)}$, generalizing the classical B\"{o}ttcher coordinate~\cite[p.~90]{milnor}. The natural Galois-equivariance of this isomorphism allows us to reduce many questions about backward orbits in $C$ to questions about Kummer theory.

More accurately, though, our isomorphism is between paths in the first correspondence, and paths in the second.
As in~\cite{cor_heights, cor_pcc} we replace our dynamical correspondence $C:g(y)=f(x)$ with a single-valued dynamical system $\sigma_C:\path_C\to \path_C$. Here $\path_C$ is the collection of all possible forward paths through the correspondence (over $\sep{K}$), and $\sigma_C$ is the shift map, which forgets the first vertex of a path, and treats the second vertex as the starting point. One might think of this as a dynamical system in the category of sets, or note that $\sigma_C$ is in fact a morphism of $K$-schemes. Next, we construct a formal isomorphism of dynamical systems
\[\xymatrix{
\path_C \ar[d]_{\Xi} \ar[r]^{\sigma_C}  &  \path_C \ar[d]^\Xi\\  
\path_B \ar[r]_{\sigma_B} & \path_B,
}\]
with  $B:y^{\deg(g)}=x^{\deg(f)}$. In the case of good reduction, our formal isomorphism converges to an actual isomorphism exactly on the set of paths whose initial vertex is non-integral (i.e., on a disk of radius one centred at infinity). We make the fairly simple connection between inverse images in the system $\sigma_C:\path_C\to\path_C$ and inverse images relative to the original correspondence $C$, and then use the relation to the simpler correspondence $B$ to say something about the action of Galois.

Note that the hypothesis $\gcd(\deg(f), \deg(g))=1$ in  Theorem~\ref{th:main} is necessary for our approach to work. For example, $\operatorname{Gal}(\overline{K}/K)$ cannot act nearly transitively on inverse orbits under the correspondence $y^2=x^4$. If $\alpha\in K$, then the $n$th preimages of $\alpha$ under this correspondence are those elements of $\overline{K}$ of the form $\zeta \beta$ with $\beta^{2^n}=\alpha$ and $\zeta^{4^n}=1$. If $\sigma\in \operatorname{Gal}(\overline{K}/K)$ satisfies $\sigma(\beta)=\zeta\beta$, then
\[\beta^{2^n}=\alpha=\sigma(\alpha)=\sigma(\beta)^{2^n}=\zeta^{2^n}\beta^{2^n},\]
and so $\zeta^{2^n}=1$. In other words, there are at least as many Galois orbits amongst the $n$th preimages of $\alpha$ as there are cosets of $4^n$th roots of unity by $2^n$th roots, in other words at least $2^n$ orbits. This specific pathology is removed by requiring that $f$ and $g$ are not of the form $f(x)=F\circ h(x)$ and $g(y)=G\circ h(y)$ for any non-linear polynomial $h$, but since we are uniformizing by the correspondence $y^{\deg(g)}=x^{\deg(f)}$, this ultimately requires us to assume that $\deg(g)$ and $\deg(f)$ have no common factor.

This paper is organized as follows. In Section~\ref{sec:formal} we construct the formal isomorphism $\Xi$, without regard to convergence. This construction is very general, and is largely independent of the nature of the ground field. In Section~\ref{sec:local} we then show that, if the ground field is a local field, this formal isomorphism gives rise to an actual isomorphism in a neighbourhood of infinity, in good reduction and in bad. We have left unresolved the question of convergence when the ground field is $\CC$, which would be of interest for the purpose of studying the complex dynamics of correspondences. Finally, in Section~\ref{sec:galois} we examine the Galois theory of the correspondence $y^e=x^d$, and what it tells us about other correspondences.


\section{The formal series}\label{sec:formal}

Let $R$ be a Dedekind ring, with field of fractions $K$, and let $1\leq e<d$ be two integers invertible in $R$ (so in particular the characteristic of $K$ does not divide $ed$). We fix two monic polynomials $f(x), g(x)\in R[x]$, satisfying $\deg(f)=d$ and $\deg(g)=e$, and consider the (polarized) correspondence
\[C:g(y)=f(x).\]
 Although the construction is outlined in \cite{cor_heights}, we briefly recall how the scheme $\path_C$ is constructed, omitting the subscript when context makes it clear. Our aim is to construct a scheme $\path$, a finite morphism $\sigma:\path\to\path$, and a morphism $\pi:\path\to\PP^1$ which make the following diagram commute.
\begin{equation}\label{eq:comdi}\xymatrix{
\path \ar[d]_{\pi} \ar[dr]_{\epsilon} \ar[rr]^\sigma &  &  \path \ar[d]^\pi\\  
\PP^1 & C \ar[l]^x \ar[r]_y & \PP^1.
}\end{equation}
Here $\epsilon$ picks out the first edge in a path, the point on $C$ corresponding to the edge from $\pi(P)$ to $\pi\circ\sigma(P)$.
 
Let
\[A=R[x_{i, j}:0\leq i<\infty, 0\leq j\leq 1]\]
be graded so that $x_{i,j}$ has weight $(d/e)^i$. Note that our hypothesis $d>e$ implies that $A$ contains only finitely many monomials below any given weight, which is essential to the inversion of power series below. Let $I$ be the (homogeneous) ideal generated by polynomials of the form
\[x_{i+1, 0}^{\deg(g)}x_{i, 0}^{\deg(f)}\left(g(x_{i+1, 1}/x_{i+1, 0})-f(x_{i, 1}/x_{i, 0})\right)\]
We then set $\mathscr{P}=\operatorname{Proj}(A/I)$. The map $\pi:\mathscr{P}\to \PP^1$ is that induced by the natural inclusion $R[x_0, x_1]\to A$ by $x_j\mapsto x_{0, j}$, while the map $\sigma:\mathscr{P}\to\mathscr{P}$ corresponds to $A\to A$ by $x_{i, j}\mapsto x_{i+1, j}$.
 In the special case where $g(y)=y^e$ and $f(x)=x^d$, we denote this scheme by $\mathscr{B}$, with corresponding maps $\psi:\mathscr{B}\to \PP^1$ and $\tau:\mathscr{B}\to\mathscr{B}$. 
 
Writing $\Ocal_{\mathscr{P}, \infty}$ and $\Ocal_{\mathscr{B}, \infty}$ for the completed local rings of these two schemes at the unique point above $\infty\in \PP^1$, we set $\widehat{\mathscr{P}}=\Spec(\Ocal_{\mathscr{P}, \infty})$ and $\widehat{\mathscr{B}}=\Spec(\Ocal_{\mathscr{B}, \infty})$. Note that the maps $\sigma$ and $\tau$ induce endomorphisms of $\widehat{\mathscr{P}}$ and $\widehat{\mathscr{B}}$, respectively, which we will also denote by $\sigma$ and $\tau$.  Our next result makes precise our formal isomorphism of $\mathscr{P}$ and $\mathscr{B}$ at infinity.

\begin{theorem}\label{th:formal}
There is an isomorphism $\Xi:\widehat{\mathscr{P}}\to\widehat{\mathscr{B}}$ of $K$-schemes such that $\Xi\circ\sigma=\tau\circ \Xi$.
\end{theorem}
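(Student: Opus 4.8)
The plan is to build $\Xi$ from a single ``B\"{o}ttcher coordinate'' on $\widehat{\mathscr P}$ and to solve a functional equation for it. Since $\psi\circ\tau^{n}$ is the function picking out the $n$th vertex $w_n$ of a path in $\mathscr B$, the intertwining relation $\Xi\circ\sigma=\tau\circ\Xi$ forces
\[
\Xi^{*}(w_n)=\psi\circ\tau^{n}\circ\Xi=\psi\circ\Xi\circ\sigma^{n}=\Phi\circ\sigma^{n},
\qquad \Phi:=\Xi^{*}\psi\in\Ocal_{\mathscr P,\infty}.
\]
Thus $\Xi$ is determined by the single function $\Phi$, and conversely any $\Phi$ defines a shift-equivariant morphism $P\mapsto(\Phi(P),\Phi(\sigma P),\dots)$ to the ambient space of sequences. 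This morphism lands in $\widehat{\mathscr B}$ precisely when its image satisfies the defining relation $w_{n+1}^{e}=w_n^{d}$ of $B$, i.e. exactly when $\Phi$ solves the B\"{o}ttcher equation $(\Phi\circ\sigma)^{e}=\Phi^{d}$. So the whole theorem reduces to producing a solution $\Phi$ of this equation that is tangent to the identity at infinity.

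To solve it I would work near the point above $\infty$, writing $z_i=x_{i,1}/x_{i,0}$ for the affine coordinate of the $i$th vertex, so that $z_i=z_0\circ\sigma^{i}$ and the generating relations read $g(z_{i+1})=f(z_i)$. Since $f,g$ are monic of degrees $d>e$, near infinity $z_{i+1}^{e}=z_i^{d}(1+\cdots)$, so that $\Lambda:=z_0^{d}/z_1^{e}$ lies in $1+\mathfrak m$ and $\log\Lambda$ is defined. Seeking $\Phi=z_0/V$ with $V\in 1+\mathfrak m$, the equation $(\Phi\circ\sigma)^{e}=\Phi^{d}$ becomes $V^{d}=\Lambda\,(V\circ\sigma)^{e}$; taking logarithms and iterating the relation $d\log V=\log\Lambda+e\log(V\circ\sigma)$ gives the explicit solution
\[
\log V=\sum_{n\geq0}\frac{e^{n}}{d^{n+1}}\,\log\!\bigl(\Lambda\circ\sigma^{n}\bigr),
\]
which one verifies directly. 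Here $d$, hence $d^{n+1}$, is invertible and $\operatorname{char}K=0$, so $\log$ and $\exp$ are available; convergence of the series is exactly the point of the weight grading on $A$, since the $n$th summand involves only the high-weight coordinates $z_n,z_{n+1}$ and so has order $\sim(d/e)^{n}\to\infty$ at infinity, while the hypothesis that $A$ has only finitely many monomials below a given weight guarantees that each graded piece of $\log V$ receives contributions from only finitely many $n$. Setting $\Phi=z_0/\exp(\log V)$ then gives a solution with $\Phi\equiv z_0$ to leading order.

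It remains to see that the resulting $\Xi$ is an \emph{isomorphism} of completed local rings. Because $V\in 1+\mathfrak m$, we have $\Phi\circ\sigma^{n}\equiv z_n$ modulo higher order for every $n$, so $\Xi^{*}$ carries the $n$th vertex coordinate of $\mathscr B$ to the $n$th vertex coordinate of $\mathscr P$ up to strictly higher-order terms. The leading relations of both $\mathscr P$ and $\mathscr B$ at infinity coincide, namely $z_{i+1}^{e}=z_i^{d}$ (the lower-degree terms of $f$ and $g$ being of higher order there), so $\Xi^{*}$ induces the identity on the associated graded rings; since both local rings are complete and separated for the induced filtration, $\Xi^{*}$, and hence $\Xi$, is an isomorphism.

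The main obstacle is not the formal solution of the functional equation but the local geometry underlying it. One must pin down the completed local ring $\Ocal_{\mathscr P,\infty}$ of this infinite-type, $\mathbb Q$-graded scheme at infinity --- in particular that there is a \emph{unique} point of $\mathscr P$ above $\infty$, so that $\widehat{\mathscr P}$ is well defined --- fix the order function with respect to which the displayed series converges and with respect to which ``tangent to the identity'' is meaningful, and verify the associated-graded computation rigorously. Everything else is the classical B\"{o}ttcher argument transported to the shift dynamics on path space.
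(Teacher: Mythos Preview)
Your proposal is correct (in characteristic zero) and reaches the same conclusion, but by a genuinely different route from the paper.

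\textbf{Construction of the B\"ottcher coordinate.} You solve the functional equation $(\Phi\circ\sigma)^e=\Phi^d$ by writing $\Phi=z_0/V$, taking logarithms, and summing the resulting telescoping series $\log V=\sum_{n\ge 0}(e^n/d^{n+1})\log(\Lambda\circ\sigma^n)$. The paper instead works with local parameters $w_i=1/z_i$ and, for each $n$, uses the Henselian property of $S_n$ to extract a $d^n$th root $\xi_n$ of $w_n^{e^n}$ tangent to $w_0$; it then shows $\{\xi_n\}$ is Cauchy in the completed direct limit and takes $\xi=\lim\xi_n$. Your method is quicker and more transparent, but it genuinely requires $\operatorname{char}K=0$ (for $\exp$) and produces coefficients in $K$; the paper's Hensel-and-limit argument works whenever $d,e$ are units in $R$ (so in any characteristic not dividing $ed$) and keeps all coefficients in $R$. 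That $R$-integrality is not incidental: the paper later invokes it directly in the good-reduction case of Theorem~\ref{th:isomdisk} to get convergence on $\pi^{-1}(D(\infty;1))$. Your explicit series, by contrast, has the denominators $d^{n+1}$ visible and then $n!$ from $\exp$, so you would need a separate argument to recover integrality.

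\textbf{Invertibility.} Your associated-graded argument (the map is the identity on $\operatorname{gr}$, hence an isomorphism of complete separated filtered rings) is cleaner than what the paper does: the paper builds the inverse $\Psi$ explicitly by a Lagrange-inversion style recursion on monomial coefficients, exploiting the well-ordering of monomial weights. Both arguments hinge on the same fact you identify, namely that only finitely many monomials lie below any given weight because $d>e$.

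The ``obstacles'' you flag at the end---making $\Ocal_{\mathscr P,\infty}$ precise, the unique point over $\infty$, the order function---are exactly what the paper handles by working concretely with the truncated rings $S_n$ and their valuations, rather than appealing to general scheme-theoretic facts.
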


\begin{proof}
To maintain a level of generality that will be useful in Section~\ref{sec:local}, we suppose for now that we have sequences $g_i, f_i$ of monic polynomials with coefficients in $R$, with $\deg(g_i)=e$ and $\deg(f_i)=d$ for all $i$, and we let
\[S_n=R\llbracket w_0, ..., w_n\rrbracket /\left(w_{i+1}^{e}w_i^{d}(g_{i+1}(w_{i+1}^{-1})-f_i(w_i^{-1})):0\leq i<n\right),\]
noting that $S_n\subseteq S_{n+1}$. Each of these local rings comes equipped with a valuation which is trivial on $R$, that is, nonzero elements of $R$ have valuation zero. By abuse of notation we denote all of these valuations by $v$, normalized so that $v(w_0)=1$. For every $X>0$ we define an ideal \[\mathfrak{m}_{n, X}=\{s\in S_n:v(s)>X\}\cup \{0\}.\] For any ideal $I$, we write $x=y+O(I)$ to indicate that $x-y\in I$.

Now, $w_n^{e^n}=w_0^{d^n}+O(\mathfrak{m}_{n, d^n})$ so, since $S_n$ is Henselian, there is an element $\xi_n=w_0+O(\mathfrak{m}_{n, 1})$ with $\xi_n^{d^n}=w_n^{e^n}$.

Note that if $v(x)<0$ and
\[y^e+\cdots = g_{i+1}(y)=f_i(x)=x^{d}+\cdots\]
with coefficients in $R$, then
\[\left|\frac{y^e}{x^d}-1\right|_v\leq \left|x^{-1}\right|_v.\]
Also, note that if $z=1+O(\mathfrak{m}_{n, 0})$, then $|1-z|_v=|1-z^n|_v$ for any $n\neq 0$ in $R$ (that is, $n$ not divisible by the characteristic of $K$).
From these we estimate, in the $v$-adic absolute value,
\begin{eqnarray*}
\left|1-\frac{\xi_{n}}{\xi_{n+1}}\right|_v&=&\left|1-\frac{\xi_{n}^{d^{n+1}}}{\xi_{n+1}^{d^{n+1}}}\right|_v\\
&=&\left|1-\frac{w_{n}^{de^{n}}}{w_{n+1}^{e^{n+1}}}\right|_v\\
&=&\left|1-\frac{w_n^d}{w_{n+1}^e}\right|_v\\
&\leq &|w_{n}|_v\\
&=&|w_0|_v^{(d/e)^n},
\end{eqnarray*}
whereupon
\[|\xi_{m}-\xi_n|_v\leq  |w_0|_v^{(d/e)^{\min\{m, n\}}+1}.\]
So the sequence $\{\xi_n\}_{n\geq 0}$ is Cauchy (given our hypothesis that $d>e$). If $S$ is the completion of the direct limit $\varinjlim S_n$, then the sequence $\{\xi_n\}_{n\geq 0}$ has a limit $\xi\in S$.

Now restrict attention to the case $g_i=g$ and $f_i=f$ for all $i$, in which case $S=\Ocal_{\path, \infty}$.  Note that for each $n$,
\[(\xi_n\circ \sigma)^{ed^n}=w_{n+1}^{e^{n+1}}=\xi_{n+1}^{d^{n+1}},\]
and so $(\xi_n\circ\sigma)^e/\xi_{n+1}^d=\zeta$ for some $\zeta^{d^n}=1$. On the other hand, we have $v(\xi_n\circ\sigma -w_1)>1$, and $v(\xi_n^d-w_1)>1$, so it must be the case that $\zeta=1$ (or else we have $v(1-\zeta)<0$ for some $\zeta\neq 1$).  It follows that
\[(\xi_n\circ\sigma)^e=\xi_{n+1}^d.\]
Taking limits of both sides as $n\to \infty$, we have $(\xi\circ \sigma)^e=\xi^d$. In other words, the map 
\[\Xi(P)=(\xi(P), \xi\circ\sigma(P), \xi\circ\sigma^2(P), ...)\]
is a morphism of $K$-schemes, $\Xi:\widehat{\path}\to \widehat{\mathscr{B}}$. From the definition, it is clear that $\Xi\circ \sigma =\tau\circ \Xi$.

It remains to show that $\Xi$ is invertible, which we do by a generalization of the Lagrange inversion formula. Let $M$ denote the set of monomials in the $w_i$, and let $N$ denote the set of monomials in variables $z_i$ subject to $z_{i+1}^e=z_i^d$. We may write $\xi$ as
\[\xi=\sum_{w\in M} c(w)w,\]
where $c(w)\in R$. By the standard abuse of notation, we consider these monomials both as elements of $R$, and as functions. Note that if $v$ is the normalized valuation on $S$ induced by the (compatible) valuations $v$ on the $S_n$, then $v(w)\geq 1$ for all $w\in M$, with equality just in case $w=w_0$. Similarly, there is a natural valuation $v'$ on the ring $R[z:z\in N]$ satisfying $v'(z_i)=(d/e)^i$ for all $i\geq 0$, and $v'(r)=0$ for nonzero $r\in R$. 

Note that   $c(w_0)=1$, since $v(\xi-w_0)>1$. 
Write \[\gamma=\sum_{z\in N} b(z)z,\] with the $b(z)$ to be chosen later.
For any $X$ there exist only finitely many $z\in N$ with $v'(z)\leq X$, and we let $\mathfrak{n}_X$ be the ideal generated by monomials $z\in N$ with $v'(z)>X$.
Note that $\{v'(z):z\in N\}$ is well-ordered, and so we may define the coefficients $b(z)$ recursively. We will do so in such a way that
\[\xi(\gamma, \gamma\circ\tau, ...)=z_0+O(\mathfrak{n}_X)\]
for all $X$, i.e., such that $\xi(\gamma, \gamma\circ\tau, ...)=z_0$.
To start, we choose $b(z_0)=1$, which ensures that $\xi(\gamma, \gamma\circ\tau, ...)=z_0+O(\mathfrak{n}_1)$.

Now, assuming that we have chosen $b(z)$ for all $z\in N$ with $v'(z)<X$, assuming that there exists a monomial of valuation $X$, and assuming that for any $Y<X$ we have
\[\xi(\gamma, \gamma\circ\tau, ...)=z_0+O(\mathfrak{n}_Y),\]
we show how to choose $b(z)$ for every monomial $z\in N$ satisfying $v(z)=X$. 
We have
\[\xi(\gamma, \gamma\circ \tau, ...)=\sum_{w\in M}c(w)w(\gamma, \gamma\circ\tau, ...),\]
and we consider each summand separately. The summand corresponding to $w=w_0$ is simply
\[\gamma=\sum_{z\in N}b(z)z,\]
and within this the summands of valuation $X$ are simply those of the form $b(z)z$ with $v'(z)=X$.

In general, the summand corresponding to the monomial $w=w_0^{e_0}w_1^{e_1}\cdots w_{k}^{e_k}$ is
\begin{equation}\label{eq:summands}c(w)\gamma^{e_0}(\gamma\circ\tau)^{e_1}\cdots(\gamma\circ\tau^k)^{e_k}= c(w)\sum_{i=0}^k\sum_{n_{i, 1}, ..., n_{i, e_i}\in N}\prod_{i=0}^k\prod_{j=1}^{e_i}b(n_{i, j})n_{i, j}^{\tau^i}\end{equation}
where the action of $\tau$ on $N$ is defined by $z_i^\tau=z_{i+1}$. Note that
\[v'\left(\prod_{i=0}^k\prod_{j=1}^{e_i}b(n_{i, j})n_{i, j}^{\tau^i}\right)=\sum_{i=0}^k\sum_{j=1}^{e_i}\left(\frac{d}{e}\right)^iv'(n_{i, j}),\]
and that $v'(n_{i, j})\geq 1$ for every monomial. So if an individual summand
\[c(w)\prod_{i=0}^k\prod_{j=1}^{e_i}b(n_{i, j})n_{i, j}^{\tau^i}\]
in~\eqref{eq:summands} has valuation $X$, then either $w=w_0$ (in which case the summand is just $b(z)z$ for some $z$) or else each $n_{i, j}$ satisfies $v'(n_{i, j})<X$. In other words, for each monomial $z\in N$ with $v'(z)=X$, the coefficient of $z$ in  $\xi(\gamma, \gamma\circ \tau, ...)$ is $b(z)+\beta(z)$, where $\beta(z)$ is some $R$-linear combination of products of coefficients of the form $b(z')$ where $v'(z')<X$ (i.e., whose values have previously been chosen). We may now simply choose $b(z)=-\beta(z)$.
 This choice gives us
\[\xi(\gamma, \gamma\circ\tau, ...)=z_0+O(\mathfrak{n}_X),\]
and, taking $X\to\infty$, we have our series $\gamma$.

Now, define $\Psi:\widehat{\mathscr{B}}\to \widehat{\mathscr{P}}$ by
\[\Psi(Q)=(\gamma(Q), \gamma\circ\tau(Q), ...).\]
We claim  $\Psi$ and $\Xi$ are inverses. Note that
\begin{eqnarray*}
\Xi\circ\Psi(z_0, z_1, ...)&=&(\xi(\gamma, \gamma\circ\tau, ...), \xi(\gamma\circ\tau, \gamma\circ\tau^2, ...))\\
&=&(z_0, \xi(\gamma(\tau(z_0, ...)), \gamma\circ\tau(\tau(z_0, ...), ...)))\\
&=&(z_0, z_1, ...),
\end{eqnarray*}
so $\Psi$ is a right inverse of $\Xi$.
On the other hand the same argument that constructs $\Psi$ also constructs a right inverse of $\Psi$, which then must be $\Xi$.
\end{proof}


\section{Convergence over a local ring}\label{sec:local}

In Section~\ref{sec:formal} we constructed an isomorphism of formal schemes. In this section we will take $K$ to be a separably closed field in which $ed$ is a unit, complete with respect to a non-archimedean absolute value. We will show that there is a neighbourhood of $\infty$ on which this formal series converges to an isomorphism. As in the introduction, we will take $f, g\in K[z]$ monic, with $\deg(g)<\deg(f)$.

Before continuing, however, we make a comment on what it means to evaluate $\xi$ at a point, since it is a power series in infinitely many (related) variables. As we saw in Section~\ref{sec:formal}, the monomials $w\in M\subseteq A$ are partially ordered by valuation, with the property that for any $X$ there are only finitely many $w$ with $v(w)\leq X$. Since each monomial involves only finitely many variables, each partial sum $\sum_{v(w)\leq X}c(w)w(P)$ makes sense, ultimately being just a polynomial in finitely many of the $w_i$. We can then reasonably define
\[\xi(P)=\lim_{X\to \infty}\sum_{v(w)\leq X} c(w)w(P),\]
if that limit exists (in whatever topology is natural for the context). Note that in the context of local fields, this limit exists if and only if $c(w)w(P)\to 0$ as $v(w)\to\infty$.
\begin{theorem}\label{th:isomdisk}
Let $\path$, $\mathscr{B}$, and $\Xi$ be as defined as in Section~\ref{sec:formal}. Then there exist neighbourhoods $\infty\in U_1\subseteq \path(K)$ and $\infty\in U_2\subseteq \mathscr{B}(K)$ such that $\Xi$ induces an isomorphism $\Xi:U_1\to U_2$. Furthermore, if the correspondence has good reduction, we may take $U_1=\pi^{-1}(D(\infty; 1))$ and $U_2=\psi^{-1}(D(\infty; 1))$.
\end{theorem}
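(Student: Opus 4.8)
The plan is to prove convergence of the formal series $\xi$ (and its inverse $\gamma$) on a suitable neighborhood of $\infty$, and to show that $\Xi$ restricts to a bijection there. By the discussion preceding the theorem, the series $\xi(P) = \lim_{X\to\infty}\sum_{v(w)\leq X} c(w)w(P)$ converges at a point $P$ precisely when $c(w)w(P)\to 0$ as $v(w)\to\infty$. Since the coefficients $c(w)$ lie in the integer ring $R$ and hence have absolute value at most $1$, the key is to control the size of $w(P)$. In the good-reduction case, working on $\pi^{-1}(D(\infty;1))$ means we are at points where the coordinate $w_0 = 1/x$ has $|w_0(P)| < 1$; I would first record that the defining relations $g_{i+1}(w_{i+1}^{-1}) = f_i(w_i^{-1})$ together with good reduction force $|w_{i}(P)| = |w_0(P)|^{(d/e)^i}$ (the estimate $|y^e/x^d - 1|\le |x|^{-1}$ from the proof of Theorem~\ref{th:formal} shows $w_{i+1}^e$ and $w_i^d$ have the same absolute value). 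Consequently $|w(P)| \le |w_0(P)|^{v(w)}$ for every monomial $w$, so $|c(w)w(P)| \le |w_0(P)|^{v(w)} \to 0$. This gives convergence of $\xi$ on all of $\pi^{-1}(D(\infty;1))$, and the symmetric argument using $v'(z_i) = (d/e)^i$ gives convergence of $\gamma$ on $\psi^{-1}(D(\infty;1))$.

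**From convergence to isomorphism.**

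Next I would promote the formal identities $\Xi\circ\Psi = \mathrm{id}$ and $\Psi\circ\Xi = \mathrm{id}$, established in Theorem~\ref{th:formal}, into genuine identities of convergent maps on these disks. The subtlety is that the compositions involve substituting one convergent series into another, so I must verify that the rearrangement of terms implicit in the formal identity is justified $v$-adically. Because every monomial rearrangement respects the valuation filtration (each $\mathfrak{m}_{n,X}$ and $\mathfrak{n}_X$ is cofinal and the partial sums are honest polynomials), the formal equality of coefficients combined with termwise convergence yields equality of the limits. This shows $\Xi: \pi^{-1}(D(\infty;1)) \to \psi^{-1}(D(\infty;1))$ and $\Psi$ in the reverse direction are mutually inverse, proving the good-reduction case with $U_1$ and $U_2$ as claimed.

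**The bad-reduction case and the main obstacle.**

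For the general (possibly bad reduction) case, I would argue that convergence still holds on a smaller disk $D(\infty; \rho)$ for some $\rho > 0$ determined by the valuations of the coefficients of $f$ and $g$. Here the clean relation $|w_i(P)| = |w_0(P)|^{(d/e)^i}$ can fail because the lower-order terms of $f_i, g_i$ are no longer units, but for $|w_0(P)|$ small enough the leading terms dominate and one recovers $|w_i(P)| \le |w_0(P)|^{c(d/e)^i}$ for a suitable constant, which still forces $c(w)w(P)\to 0$; shrinking the radius absorbs the bounded denominators introduced by the non-unit coefficients. I expect the main obstacle to be exactly this uniform control in the bad-reduction setting: one must show a single radius $\rho$ works simultaneously for all the $w_i$ and for the inverse series $\gamma$, rather than a radius that degenerates to $0$ as $i\to\infty$. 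The saving feature is that the exponents $(d/e)^i$ grow geometrically while the valuations of the coefficients are fixed, so any loss from bad reduction is swamped by the rapid growth of the dominant term once $|w_0(P)|$ is below a threshold; making this threshold explicit and checking it is inherited by $\gamma$ completes the proof.
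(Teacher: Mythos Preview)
Your good-reduction argument is essentially the paper's: integral coefficients $c(w)$, the relation forcing $|w_i(P)|=|w_0(P)|^{(d/e)^i}$, and hence $|c(w)w(P)|\to 0$ on $\pi^{-1}(D(\infty;1))$. Your extra paragraph on promoting the formal identities $\Xi\circ\Psi=\mathrm{id}$, $\Psi\circ\Xi=\mathrm{id}$ to honest equalities of convergent functions is more careful than the paper, which simply asserts that the same logic applied to $\Psi$ finishes the good-reduction case.

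For bad reduction the paper takes a different route than your direct-estimate approach. Rather than tracking how denominators in $f$ and $g$ propagate into bounds on $|c(w)|$, the paper \emph{reduces to the good-reduction case by rescaling}. Given $\alpha=(\alpha_0,\alpha_1,\ldots)\in\mathscr{B}\setminus\{0,\infty\}$, one sets $\varphi_\alpha(w_0,w_1,\ldots)=(\alpha_0 w_0,\alpha_1 w_1,\ldots)$ and observes that this conjugates the original correspondence into one defined by polynomials $f_i(x)=x^d+\alpha_i a_{d-1}x^{d-1}+\cdots+\alpha_i^d a_0$ and $g_i(y)=y^e+\alpha_i b_{e-1}y^{e-1}+\cdots+\alpha_i^e b_0$. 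For $|\alpha_0|$ small enough these all have integral coefficients, so the associated series $\xi_\alpha$ has integral coefficients by the Section~\ref{sec:formal} construction; the identity $\alpha_0\,\xi=\xi_\alpha\circ\varphi_\alpha$ then transfers convergence on the unit disk for $\xi_\alpha$ to convergence on a disk of radius $|\alpha_0|$ for $\xi$. This is cleaner than your plan because it never requires bounding $|c(w)|$ directly: the rescaling packages the entire ``denominators are swamped by geometric growth'' heuristic into a single change of variables. Your approach would work too, but the step you flag as the obstacle---showing that a single $\rho>0$ suffices uniformly in $i$---amounts to proving something like $|c(w)|\le M^{v(w)}$ for a fixed $M$, which you have not actually argued (your phrase ``the valuations of the coefficients are fixed'' is only literally true for the finitely many coefficients of $f,g$, not for the infinitely many $c(w)$). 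The paper's trick is precisely a slick way to establish that bound.
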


\begin{proof}
The case of good reduction is not difficult. Since everything in Section~\ref{sec:formal} works over the ring $R$ of integers of $K$, we see that the coefficients $c(w)$ of the series $\xi$ are all integral. So the limit defining $\xi(P)$ exists precisely in case $|\pi\circ\sigma^n(P)|>1$ for all $n$. But under the hypothesis of good reduction, we also have \[\max\{|\pi\circ\sigma(P)|, 1\}^e=\max\{|\pi(P)|, 1\}^d\]
for all $P$, so the condition  that  $|\pi\circ\sigma^n(P)|>1$ for all $n$ is equivalent to the same condition just for $n=0$. Ergo if $U_1=\pi^{-1}(D(\infty; 1))$, then $\Xi$ converges on $U_1$. Applying the same logic to $\Psi$ demonstrates the claim in the case of good reduction.

We now address the general case. First, let $\mathscr{G}=\mathscr{B}\setminus\{0, \infty\}$, and for $\alpha=(\alpha_0,\alpha_1, ...)\in \mathscr{G}(K)$ let
\[\varphi_\alpha(w_0, w_1, ...)=(\alpha_0w_0, \alpha_1w_1, ...),\]
\[f_i(x)=x^d+\alpha_i a_{d-1}x^{d-1}+\cdots +\alpha_i^da_0,\]
and
\[g_i(y)=y^e+\alpha_{i} b_{e-1}y^{e-1}+\cdots +\alpha_i^eb_0.\]
Note that
\[g_{i+1}(\alpha_{i+1}w_{i+1})=f_i(\alpha_iw_i)\]
for all $i$ if and only if $g(w_{i+1})=f(w_i)$.

Let $\xi_\alpha$ be the series defined in the proof of Theorem~\ref{th:formal} relative to the sequences of polynomials $g_i$ and $f_i$. Note that if $\xi$ is the series defined relative to $g$ and $f$, then we have $\alpha_0\xi= \xi_{\alpha}\circ\varphi_\alpha(w)$. Also, note that we may induce the $g_i$ and $f_i$ to all have integral coefficients merely by requiring that $|\alpha_0|$ be small enough. In other words, there exists an $\epsilon>0$ such that $\xi$ converges at $(w_0, w_1, ...)$ with $|w_n|<\epsilon^{(d/e)^n}$. But for sufficiently small $\epsilon>0$ the condition $|w_0|<\epsilon$ already ensures this, so $\xi$ (and hence $\Xi$) is defined on $\pi^{-1}(D(\infty; \epsilon))$. The same reasoning applies to $\Psi$.
\end{proof}

\begin{remark}
The construction of the formal morphism $\Xi$ in Section~\ref{sec:formal} is relatively indifferent to the nature of the ground field, and so in particular over $\CC$ it gives a formal isomorphism between $g(y)=f(x)$ and $y^{\deg(g)}=x^{\deg(f)}$ at infinity, generalizing the B\"{o}ttcher coordinate. It would be interesting to know whether or not there exists a neighbourhood $U\subseteq \widehat{\CC}$ of infinity such that $\Xi$ defines a function on $\pi^{-1}(U)\subseteq \path_C(\CC)$. The author's initial investigations have failed to establish convergence in this setting.
\end{remark}


\section{The action of Galois}\label{sec:galois}

Note that the map $\Xi$ defined in Section~\ref{sec:formal} is automatically Galois equivariant, wherever it converges. If $K$ is a complete local field with absolute value $|\cdot|$, if $L/K$ is a (possibly infinite) Galois extension, and if $\sigma\in\Gal(L/K)$ then $|\sigma(x)|=|x|$ by the uniqueness of the extension of $|\cdot|$ to $L$. It follows that $\sigma$ is continuous, and so the fact that $\sigma$ commutes with finite sums implies it also commutes with (convergent) infinite sums.

The next lemma makes precise the claim that the action of $\operatorname{Gal}(\sep{K}/K)$ on preimages of $P\in\path(K)$ under $\sigma:\path\to\path$ relates in a straightforward way to the action of Galois on preimages of $\pi(P)$ in the original correspondence.

\begin{lemma}\label{lem:twopathspaces}
Let $P\in \path(K)$. Then there is a $\operatorname{Gal}(\sep{K}/K)$-equivariant isomorphism between $\check{\path}_{C}(\pi(P))$ and $\check{\path}_{(\path_C, \sigma)}(P)$.
\end{lemma}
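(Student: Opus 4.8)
The plan is to exhibit the isomorphism as nothing more than the map ``remember only the initial vertex of each path in the inverse orbit.'' Concretely, I would define
\[\Phi:\check{\path}_{(\path_C, \sigma)}(P)\longrightarrow \check{\path}_C(\pi(P)),\qquad (P_n)_{n\geq 0}\longmapsto (\pi(P_n))_{n\geq 0},\]
where $(P_n)$ is a backward path under the single-valued system $\sigma$, i.e.\ $P_0=P$ and $\sigma(P_{n+1})=P_n$ for all $n$. The first thing to check is that $\Phi$ lands in $\check{\path}_C(\pi(P))$. Certainly $\pi(P_0)=\pi(P)$, and the edge condition $g(\pi(P_n))=f(\pi(P_{n+1}))$ reads off directly from the commutative diagram~\eqref{eq:comdi}: applying $\epsilon$ to $P_{n+1}$ and using $x\circ\epsilon=\pi$, $y\circ\epsilon=\pi\circ\sigma$ together with $\sigma(P_{n+1})=P_n$ gives a point of $C:g(y)=f(x)$ with $x$-coordinate $\pi(P_{n+1})$ and $y$-coordinate $\pi(P_n)$, which is exactly the required relation.

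To build the inverse I would use the explicit description of $\path$ as a space of forward paths. The key local fact is that for any $R\in\path(\sep{K})$ the fiber $\sigma^{-1}(R)$ is carried bijectively by $\pi$ onto the set of $\beta\in\sep{K}$ with $f(\beta)=g(\pi(R))$: a $\sigma$-preimage of $R$ is precisely the path obtained by prepending a single compatible vertex $\beta$ to $R$, and that vertex is its image under $\pi$. Granting this, given a backward path $(\alpha_n)\in\check{\path}_C(\pi(P))$ I would construct $(P_n)$ recursively by $P_0=P$ and letting $P_{n+1}$ be the unique element of $\sigma^{-1}(P_n)$ with $\pi(P_{n+1})=\alpha_{n+1}$; uniqueness is legitimate because $g(\alpha_n)=f(\alpha_{n+1})$ and $\alpha_n=\pi(P_n)$ exhibit $\alpha_{n+1}$ as one of the roots of $f(\beta)=g(\pi(P_n))$. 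This assignment is manifestly inverse to $\Phi$, so $\Phi$ is a bijection.

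Galois-equivariance is then automatic: $\pi$ is a morphism of $K$-schemes, hence commutes with the action of $\Gal(\sep{K}/K)$ on $\sep{K}$-points, and $\Phi$ is just $\pi$ applied coordinatewise; the inverse map is assembled out of $\sigma$, $\pi$, and the $K$-rational polynomials $f,g$ alone, so it too is equivariant.

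The only step requiring genuine care --- and the one I would single out as the main obstacle --- is the identification of the fiber $\sigma^{-1}(R)$ with the roots of $f(\beta)=g(\pi(R))$ via $\pi$. Here I would return to the construction of $\path=\Proj(A/I)$ and the description of $\sigma$ by $x_{i, j}\mapsto x_{i+1, j}$ to confirm that a $\sigma$-preimage genuinely amounts to prepending one new vertex subject only to the first edge relation, with no hidden branching, multiplicity, or contribution from the point at infinity to spoil the bijection. Once this structural point is nailed down, the rest of the argument is formal bookkeeping.
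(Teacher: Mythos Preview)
Your proposal is correct and follows exactly the same approach as the paper: the isomorphism is the coordinatewise application of $\pi$, and Galois-equivariance comes from $\pi$ being a $K$-morphism. The paper's own proof is considerably terser---it simply asserts that the map $(...,Q_2,Q_1,Q_0)\mapsto(...,\pi(Q_2),\pi(Q_1),\pi(Q_0))$ is an isomorphism from the commutativity of~\eqref{eq:comdi}---so your explicit construction of the inverse by recursively prepending vertices, and your identification of the fiber $\sigma^{-1}(R)$ with the roots of $f(\beta)=g(\pi(R))$, are exactly the details the paper leaves to the reader.
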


\begin{proof}
It follows from the commutativity of~\eqref{eq:comdi} that the map
\[(..., Q_2, Q_1, Q_0)\mapsto (..., \pi(Q_2), \pi(Q_1), \pi(Q_0))\]
is an isomorphism between backward paths from $P$ in $\path$ and backward paths from $\pi(P)$ in $C$. Since $\pi:\path\to \PP^1$ is a morphism of $K$-schemes, this map is also $\Gal(\sep{K}/K)$-equivariant, given the condition that $P$ is $K$-rational.
\end{proof}

So now studying preimage graphs in $C$ can be related to studying the same in $\path$. On the other hand, our partial isomorphism from $\path$ to $\mathscr{B}$ relates this to preimages in $\mathscr{B}$, which in turn are related to preimages in $B$. But the Galois-theoretic behaviour of preimage trees in $B:y^e=x^d$ is fairly easy to understand, by standard multiplicative Kummer theory.

In the proof of Theorem~\ref{th:isomdisk} we made use of $\mathscr{G}=\mathscr{B}\setminus\{0, \infty\}$, which we point out here is an affine group scheme under coordinatewise multiplication. We will study the action of Galois on points in this algebraic group, subject to the constraint that $\gcd(e, d)=1$, but removing the constraint that $e<d$. As such, we may freely swap $e$ and $d$, and consider forward orbits instead of backward ones. For convenience, then, we will occasionally make reference to the exponents in the subscript:
\[\mathscr{G}_{e, d}=\operatorname{Spec}(K[x_0^{\pm 1}, x_1^{\pm 1}, ...:x_{i+1}^e=x_i^d\text{ for all }i\geq 0]).\]
Note that the morphism $\pi:\mathscr{G}\to \mathbb{G}_\mathrm{m}$ of $K$-schemes is in fact a group homomorphism, and so we have a short exact sequence of Galois modules
\[\xymatrix{
0 \ar[r]  & \mu_{e, d} \ar[r] &  \mathscr{G}_{e, d} \ar[r]^{\pi} & \mathbb{G}_{\mathrm{m}} \ar[r] & 0
}.\]
\begin{lemma}\label{lem:ede1}
Suppose that $\gcd(e, d)=1$. Then there is an isomorphism $\mathscr{G}_{e, d}\to \mathscr{G}_{e, 1}$ of $K$-schemes.
\end{lemma}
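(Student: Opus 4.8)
The plan is to recognize both schemes as diagonalizable (multiplicative-type) group schemes and to reduce the claimed isomorphism to an isomorphism of their character groups. The coordinate ring of $\mathscr{G}_{e,d}$ is the group algebra $K[M_{e,d}]$, where $M_{e,d}$ is the abelian group presented by generators $v_0, v_1, \ldots$ (corresponding to the coordinates $x_0, x_1, \ldots$) subject to the relations $e\,v_{i+1} = d\,v_i$; indeed, quotienting the Laurent ring $K[x_i^{\pm 1}]$ by the ideal $(x_{i+1}^e - x_i^d)$ is the same as quotienting the underlying free abelian group by the subgroup generated by the elements $e\,v_{i+1}-d\,v_i$. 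Since the group-algebra functor carries an isomorphism of abelian groups to an isomorphism of Hopf algebras, and $\operatorname{Spec}$ then to an isomorphism of $K$-schemes, it suffices to produce an isomorphism $M_{e,d}\cong M_{e,1}$ (this in fact gives an isomorphism of group schemes, which is more than asked).

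The core computation is to identify each character group with the subgroup $\ZZ[1/e]\subseteq\QQ$. I would use the homomorphism $M_{e,d}\to\QQ$ sending $v_i\mapsto (d/e)^i$, which respects the relations since $e\,(d/e)^{i+1}=d\,(d/e)^i$. Its image is the additive group generated by the powers $(d/e)^i$, and here the hypothesis $\gcd(e,d)=1$ enters through B\'ezout: writing $ad+be=1$ gives $1/e = a(d/e)+b$, and more generally $\gcd(d^k,e^k)=1$ yields $1/e^k\in\langle (d/e)^i\rangle$, so the image is exactly $\ZZ[1/e]$. For injectivity I would show that the truncation $M_n=\langle v_0,\dots,v_n\rangle$ is torsion-free of rank one: its relation matrix is bidiagonal, with two of its maximal minors equal to $\pm d^n$ and $\pm e^n$, whose gcd is $1$, so by Smith normal form $M_n\cong\ZZ$ and the map to $\QQ$ is injective. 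Passing to the direct limit $M_{e,d}=\varinjlim M_n$ then gives $M_{e,d}\cong\ZZ[1/e]$. The same computation with $d=1$ gives $M_{e,1}\cong\ZZ[1/e]$; indeed $M_{e,1}=\varinjlim(\ZZ\xrightarrow{\times e}\ZZ\xrightarrow{\times e}\cdots)$ directly, since the relations read $u_i = e\,u_{i+1}$.

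Combining these identifications yields $M_{e,d}\cong M_{e,1}$ and hence the desired scheme isomorphism. For concreteness I would also record the explicit map on coordinate rings $x_i\mapsto y_i^{d^i}$, which corresponds to $v_i\mapsto d^i u_i$ on character groups (so to the identity on $\ZZ[1/e]$) and is immediately checked to respect the relations; its inverse sends each $y_i$ to a monomial $\prod_j x_j^{c_{ij}}$, where the $c_{ij}$ come from expressing $1/e^i$ as an integer combination of the $(d/e)^j$. The main obstacle is the control of $M_{e,d}$---verifying that the given presentation produces a torsion-free group that embeds in $\QQ$ with image exactly $\ZZ[1/e]$. This is precisely where coprimality is indispensable, both for surjectivity onto $\ZZ[1/e]$ and for the vanishing of torsion, mirroring the remark in the introduction that a common factor of $e$ and $d$ genuinely obstructs the uniformization.
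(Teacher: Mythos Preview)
Your proof is correct and takes a genuinely different route from the paper's. The paper argues by bare hands: it recursively manufactures integers $a_{i,j}$ via repeated B\'ezout identities, sets $w_i=\prod_{j\le i} x_j^{a_{i,j}}$, verifies the relation $w_n^e=w_{n-1}$ by a telescoping computation, and then checks injectivity on the kernel $\mu_{e,d}$ and surjectivity by induction on truncations. You instead recognize both $\mathscr{G}_{e,d}$ and $\mathscr{G}_{e,1}$ as diagonalizable group schemes and reduce everything to the computation $M_{e,d}\cong\ZZ[1/e]\cong M_{e,1}$ of character groups, with Smith normal form handling torsion-freeness and B\'ezout handling the image. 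Your approach is shorter and more conceptual, delivers a group-scheme isomorphism rather than merely a scheme isomorphism, and makes the role of $\gcd(e,d)=1$ completely transparent: it is exactly what forces the $n\times n$ minors $d^n$ and $e^n$ to be coprime and what puts $1/e^k$ in the span of the $(d/e)^i$. As a bonus, your explicit one-direction map $x_i\mapsto y_i^{d^i}$ is far simpler than the paper's recursively defined exponents, though the inverse is correspondingly less explicit. The paper's construction, on the other hand, is entirely self-contained (no appeal to Smith normal form or the theory of multiplicative-type groups) and visibly preserves the first coordinate ($w_0=x_0$), a feature used downstream; your map does too, but you might flag it, since the application in the proof of Theorem~\ref{th:second} needs the isomorphism to respect the projection $\pi$ to $\mathbb{G}_{\mathrm{m}}$.
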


\begin{proof}
For $i\geq j\geq 0$, we choose integers $a_{i, j}$ and $b_{i, j}$ as follows. First, take $a_{0, 0}=1$ and $b_{0,0}=0$. Now, suppose that $a_{i, j}$ and $b_{i, j}$ have been chosen for all $0\leq j\leq i<I$. We set $b_{I, 0}=0$, and for each $0\leq j< I$ we choose $b_{I, j+1}$ and $a_{I, j}$ so that
\begin{equation}\label{eq:ab}
b_{I, j+1}d+a_{I, j}e=a_{I-1, j}+eb_{I, j},
\end{equation}
which we may always do, since $\gcd(e, d)=1$. Specifically, given a single solution $sd+et=1$, we may define
\begin{gather}
b_{I, j+1} = s(a_{I-1, j}+eb_{I, j})\label{eq:b}\\
a_{I, j} =  t(a_{I-1, j}+eb_{I, j}).\nonumber
\end{gather}
 Finally, we set $a_{I, I}=b_{I, I}$.

For each $i$, we set
\[w_i=\prod_{0\leq j\leq i} x_j^{a_{i, j}},\]
and claim that the map
$(x_0,  x_1, x_2, ...)\mapsto (w_0, w_1, w_2, ...)$
gives the desired isomorphism.

First we will verify that $w_{n+1}^e=w_n$ for every $n$. To see this, note that
\begin{equation}\label{eq:zetaomega}
\frac{w_{n}^e}{w_{n-1}}=x_n^{a_{n, n}e}x_{n-1}^{a_{n, n-1}e-a_{n-1, n-1}}\cdots x_{0}^{a_{n, 0}e-a_{n-1, 0}-b_{n, 0}e},
\end{equation}
where the apparently extraneous $b_{n, 0}$ fails to falsify the equality because $b_{n, 0}=0$. Now, by the definition of the $a_{i, j}$ and $b_{i, j}$, and the relation $x_{j+1}^e=x_j^d$, we have for each $0\leq j\leq I$,
\begin{eqnarray*}
x_{j+1}^{a_{n, j+1}e-a_{n-1, j+1}}x_{j}^{a_{n, j}e-a_{n-1, j}-b_{n, j}e}&=&
x_{j+1}^{a_{n, j+1}e-a_{n-1, j+1}}x_{j}^{-b_{n, j+1}d}\\
&=&x_{j+1}^{a_{n, j+1}e-a_{n-1, j+1}-b_{n, j+1}e}.
\end{eqnarray*}
Combining this with \eqref{eq:zetaomega} for $j=0, 1, 2, ..., i-1$ we are left with
\[\frac{w_{n}^e}{w_{n-1}}=x_{n}^{a_{n, n}e-b_{n, n}e}=1,\]
since $a_{n, n}=b_{n, n}$. 

So we have confirmed that the $w_i$ describe a morphism of $K$-schemes $\varphi:\mathscr{G}_{e, d}\to \mathscr{G}_{e, 1}$. We next show that this morphism is injective. Note that if $\pi_{e,d }$ is projection of $\mathscr{G}_{e, d}$ onto the first coordinate, then $\pi_{e, d}=\pi_{e, 1}\circ\varphi$ (since $a_{0, 0}=1$) and so $\ker(\varphi)\subseteq \mu_{e, d}=\ker(\pi_{e, d})$.
So now suppose that $(1, 1, ..., \zeta_n, ...)\in \ker(\varphi)$. By definition, we have
\[\varphi(1, 1, ..., \zeta_n, ...)= (1, 1, ..., \zeta_n^{a_{n, n}}, ...),\]
and so $\zeta_n^{a_{n, n}}=1$. Since we also have $\zeta_n^e=1^d=1$, we may conclude that $\zeta_n=1$ as soon as we know that $\gcd(a_{n, n}, e)=1$.  But by the definition~\eqref{eq:b} we have
\[a_{n, n}=b_{n, n}\equiv sa_{n-1, n-1}\equiv s^n\mod{e},\]
for all $n\geq 1$, and $s$ is coprime to $e$.
So in fact, the kernel of $\varphi$ is trivial.

Finally, we will show that $\varphi$ is surjective, by showing that the truncated map
\[\phi_n(x_0, ..., x_n)=(w_0, ..., w_n)\]
is surjective for each $n$. For $n=0$ this is obvious, since the truncated map is then the identity on $\mathbb{G}_\mathrm{m}$. Suppose that the claim is true for $n=k$, and consider a sequence $(w_0, ..., w_{k+1})$. By the induction hypothesis, we may solve $\varphi_k(x_0, ..., x_k)=(w_0, ..., w_k)$. But then $\varphi_{k+1}$ maps the $e$ distinct points of the form $(x_0, ..., x_k, y)$ injectively to the $e$ distinct points of the form $(w_0, ..., w_k, z)$, and so this association must also be surjective. This confirms that $\varphi$ is an isomorphism.
\end{proof}

%

We now assume that all $(ed)^n$th roots of unity are $K$-rational. For the proof of Theorem~\ref{th:main}, we will also need to extend $K$ so that there exists a $P\in\path(K)$ such that $\alpha=\pi(P)$. This cannot be done in a finite extension, or indeed even in an extension which leaves the valuation discrete, but luckily it can be done in a way which makes the value group a subgroup of $\ZZ[\frac{1}{e}]$.
\begin{lemma}\label{lem:kummer}
Let $L/K$ be an algebraic extension, suppose  the valuation group $\Gamma=v(L^*)\subseteq \QQ$ contains reciprocals of none of the prime divisors of $d$, and 
let $\alpha\in \mathbb{G}_\mathrm{m}(L)$ with $|\pi(\alpha)|>1$. Then $\Gal(\sep{L}/L)$ acts on $P_{B_{1, d}, \alpha}$ as a finite-index subgroup of a $d$-adic group.
\end{lemma}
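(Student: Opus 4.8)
The plan is to make explicit the Kummer theory governing $P_{B_{1,d},\alpha}$ and then extract the finite-index statement from the valuation hypothesis. First I would unwind the combinatorics. Since $B_{1,d}$ is the correspondence $y=x^d$, the backward orbit $P_{B_{1,d},\alpha}$ is exactly the tree of iterated $d$-th roots of $\alpha$: its level-$n$ vertices are $\{\beta:\beta^{d^n}=\alpha\}$, with the $d$-th power map supplying the edges. Because every $d^n$-th root of unity is $K$-rational, hence $L$-rational, fixing a compatible system $\beta_0=\alpha$, $\beta_{n+1}^d=\beta_n$ identifies level $n$ with $\mu_{d^n}$, and the transition from level $n+1$ to level $n$ becomes the map $\zeta\mapsto\zeta^d$. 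Thus the natural action of $H=\varprojlim\mu_{d^n}$ (along these $d$-th-power transition maps, which after a choice of compatible primitive roots of unity becomes $\varprojlim\ZZ/d^n\ZZ$ with reduction maps) realizes $H$ as a $d$-adic subgroup of $\Aut(P_{B_{1,d},\alpha})$ in the sense of the introduction.

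Next I would set up the Kummer character. For $\sigma\in\Gal(\sep{L}/L)$ the ratios $\chi_n(\sigma)=\sigma(\beta_n)/\beta_n\in\mu_{d^n}$ are well-defined; the hypothesis $\mu_{d^n}\subseteq L$ makes each $\chi_n$ a homomorphism, and $\beta_{n+1}^d=\beta_n$ forces $\chi_{n+1}(\sigma)^d=\chi_n(\sigma)$, so the $\chi_n$ assemble into a continuous homomorphism $\chi:\Gal(\sep{L}/L)\to H$. Its image $G=\chi(\Gal(\sep{L}/L))$ is precisely the image of Galois in $\Aut(P_{B_{1,d},\alpha})$, and it lies inside the $d$-adic group $H$. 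Everything reduces to showing $[H:G]<\infty$; since $G=\varprojlim G_n$ with $G_n=\chi_n(\Gal(\sep{L}/L))$ the image at level $n$, this index equals $\sup_n[\ZZ/d^n\ZZ:G_n]$. By Kummer theory $|G_n|=[L(\beta_n):L]$ is the order of $\alpha$ in $L^*/(L^*)^{d^n}$, so I must bound $d^n/\operatorname{ord}(\alpha\bmod(L^*)^{d^n})$ uniformly in $n$.

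The arithmetic heart is a valuation estimate. Applying $v:L^*\to\Gamma$, if $\alpha^k\in(L^*)^{d^n}$ then $k\,v(\alpha)\in d^n\Gamma$, so the order of $\alpha$ is at least the order of $a:=v(\alpha)$ in $\Gamma/d^n\Gamma$; consequently $d^n/\operatorname{ord}(\alpha\bmod(L^*)^{d^n})\le\max\{t:t\mid d^n,\ a/t\in\Gamma\}$. Here the hypothesis $|\pi(\alpha)|>1$ is used to ensure $a\neq 0$. Because $L/K$ is algebraic and $K$ carries a discrete valuation we have $\ZZ=v(K^*)\subseteq\Gamma$, and then the hypothesis that $1/p\notin\Gamma$ for every prime $p\mid d$ upgrades (via the decomposition $\Gamma/\ZZ\hookrightarrow\QQ/\ZZ=\bigoplus_q\QQ_q/\ZZ_q$) to the statement that $\Gamma\subseteq\ZZ_{(p)}$, i.e. every element of $\Gamma$ has non-negative $p$-adic valuation for each $p\mid d$. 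Writing $t=\prod_{p\mid d}p^{f_p}$, the containment $a/t\in\Gamma$ forces $v_p(a)-f_p\ge 0$, hence $t\le\prod_{p\mid d}p^{v_p(a)}$, a bound independent of $n$. Therefore $[H:G]\le\prod_{p\mid d}p^{v_p(v(\alpha))}<\infty$, which is the claim.

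I expect the delicate point, and the one most easily mishandled, to be exactly this last implication: $1/p\notin\Gamma$ bounds the $p$-divisibility of $\Gamma$ only once one knows $\ZZ\subseteq\Gamma$. Without it, groups such as $3\ZZ[1/2]$ satisfy $1/2\notin\Gamma$ yet contain $3/2^j$ for all $j$, and the valuation bound collapses; so the proof must explicitly invoke that $\Gamma$ contains $\ZZ$, which holds here because $v$ restricts to a discrete valuation on $K$. A secondary point to state cleanly is that $v$ furnishes only a \emph{lower} bound on $\operatorname{ord}(\alpha\bmod(L^*)^{d^n})$; this is harmless, since unit contributions can only enlarge $G_n$ and we need only an upper bound on the index.
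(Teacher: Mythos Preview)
Your proof is correct and takes essentially the same Kummer-theoretic route as the paper: identify the Galois image inside $H=\varprojlim\mu_{d^n}$ via the cocycle $\sigma\mapsto\sigma(\beta_n)/\beta_n$, then push through the valuation map $L^*/(L^*)^{d^n}\to\Gamma/d^n\Gamma$ to bound the index. The paper packages the endgame as a reduction to the case where the numerator of $v(\alpha)$ is prime to $d$ (after replacing $\alpha$ by a suitable $d^m$-th root and patching the subtrees), whereas you give the direct uniform bound $[H:G]\le\prod_{p\mid d}p^{v_p(v(\alpha))}$; your explicit check that $\ZZ\subseteq\Gamma$ together with $1/p\notin\Gamma$ forces $\Gamma\subseteq\ZZ_{(p)}$ spells out a step the paper leaves tacit.
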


\begin{proof}
Let $\mu[d]$ denote the group of $d$th roots of unity. As usual, if $\beta^d=\alpha\in L$, then $\sigma\mapsto \sigma(\beta)/\beta$ gives a map $\Gal(L(\beta)/L)\to\mu[d]$ whose image is isomorphic to the subgroup of $L^*/(L^*)^{d}$ generated by $\alpha$. Suppose that the numerator of $v(\alpha)$ is prime to $d$. Then composing with the map $L^*/(L^*)^d\to \Gamma/d\Gamma$, we see that $\alpha$ has order $d$ in $L^*/(L^*)^d$. Replacing $d$ with $d^n$, and letting $n\to\infty$, we have in this case that the natural map $\Gal(\sep{L}/L)\to \mu[d^\infty]\cong \widehat{\ZZ}_d$ is surjective, and hence $\Gal(\sep{L}/L)$ acts on $P_{B_{1, d}, \alpha}$ as a $d$-adic subgroup.

In general, suppose that $v(\alpha)\neq 0$. Then for some $m$, the numerator of $v(\alpha)/d^m$ is prime to $d$. By the argument above, for any $\beta^{d^m}=\alpha$, $\Gal(\sep{L}/L(\beta))$ acts on $P_{B_{1, d}, \beta}$ as a $d$-adic subgroup of $\Aut(P_{B_{1, d}, \beta})$. On the other hand, if $\beta'$ is another $d^m$th root of $\alpha$, note that $\beta'=\zeta \beta$ for so $d^m$th root of unity $\zeta\in K$, and so $L(\beta')=L(\beta)$. Furthermore, multiplication by any $d^n$th root of $\zeta$ takes the $n$th level of the tree $P_{B_{1, d}, \beta}$ to the $n$th level of the tree $P_{B_{1, d}, \beta'}$, and so the action of $\Gal(\sep{L}/L(\beta))$ on these two sub-trees of $P_{B_{1, d}, \alpha}$ is compatible. So up to labelling, the image of $\Gal(\sep{L}/L)$ in $\Aut(P_{B_{1, d}, \alpha})$ contains at least the full subgroup which acts trivially on the first $m+1$ levels.
\end{proof}

We now turn to the proof of the main theorem. For simplicity, if $G$ is a group acting on two sets $X$ and $Y$, we say that the actions are isomorphic if and only if there is a bijection $\varphi:X\to Y$ such that $\varphi(x^\sigma)=\varphi(x)^\sigma$ for every $\sigma\in G$ and $x\in X$.

\begin{proof}[Proof of Theorem~\ref{th:second}] Rather than prove the theorem in the case that $v(K^*)\cong \ZZ$, we relax the hypothesis to assume that no prime divisor of $d$ has a reciprocal in $v(K^*)$.
Now note that if $L/K$ is an algebraic extension, then $\Gal(\sep{L}/L)\subseteq \Gal(\sep{K}/K)$, and so it suffices to prove the theorem after an algebraic extension of the base. In particular, with $C$ and $\alpha$ as in the theorem, we will assume without loss of generality that $K$ contains all $(ed)^n$th roots of unity, and that there is exists a $P\in \path(K)$ with $\alpha=\pi(P)$. Note that this is possible after an algebraic extension of the base, but certainly not a finite one. Also note that $e$ becomes invertible in the value group of this extension, but since $\gcd(e, d)=1$, our property that no prime divisor of $d$ has a reciprocal appearing in the value group is maintained. We may also pass to a finite extension such that the now-isomorphic correspondence $g(\gamma y)=f(\gamma x)$ has leading coefficients of $f$ and $g$ the same, and hence equal to 1 without loss of generality.

Now, with these assumptions in place, note that the action of $\Gal(\sep{K}/K)$ on the preimage tree of $\alpha$ under $C$ is isomorphic to that on the preimage tree of $P$ under $\sigma:\path\to\path$ by Lemma~\ref{lem:twopathspaces}. By Theorem~\ref{th:isomdisk} there is an isomorphism from this to the action of $\Gal(\sep{K}/K)$ on the preimage tree of $\Xi(P)\in\mathscr{G}_{e, d}(K)$.  Lemma~\ref{lem:twopathspaces} now shows that the action of $\Gal(\sep{K}/K)$ on this is isomorphic to its action on the space of backward orbits of $\beta=\psi\circ\Xi(P)$ under the correspondence $B_{e, d}$. So it suffices to restrict attention to the case of correspondences of the form $B_{e, d}:y^e=x^d$. Note that backward orbits in $B_{e, d}$ correspond to forward orbits in $B_{d, e}$; by Lemma~\ref{lem:ede1}, it suffices to restrict further to the case $e=1$, which is handled by Lemma~\ref{lem:kummer}.
\end{proof}

As noted in the introduction, Theorem~\ref{th:main} follows directly from Theorem~\ref{th:second}.


\end{document}